\newtheorem{theorem}{Theorem}
\newtheorem{proposition}[theorem]{Proposition}
\theoremstyle{definition}
\newtheorem{example}[theorem]{Example}
\numberwithin{equation}{section}
\numberwithin{theorem}{section}
\begin{document}

\title{Lowest Unique Bid Auctions}

\author{
Marco Scarsini%
   \thanks{Marco Scarsini's research was partially supported by  MIUR-COFIN. A special thank to Nicolas Katz for introducing this author to lowest unique bid auctions.}\\
   Dipartimento di Scienze Economiche e Aziendali\\
   LUISS\\
   Viale Romania 32\\
   I--00197 Roma, Italy\\
   and HEC, Paris\\
   \texttt{marco.scarsini@luiss.it}
\and
   Eilon Solan%
   \thanks{Eilon Solan's research was supported by the Israel Science Foundation (grant 212/09).}\\
   The School of Mathematical Sciences\\
   Tel Aviv University\\
   Tel Aviv 69978, Israel\\
\texttt{eilons@post.tau.ac.il},
\and
Nicolas Vieille\\
HEC Paris\\
Economics and Finance Department\\
1 rue de la Lib\'eration\\
F--78351 Jouy-en-Josas Cedex,
France\\
\texttt{vieille@hec.fr}
}

\bigskip

\date{\today}

\maketitle

\thispagestyle{empty}

\newpage

\begin{abstract}

\bigskip

We consider a class of auctions (Lowest Unique Bid Auctions) that have achieved a considerable success on the Internet. Bids are made in cents (of euro) and every bidder can bid as many numbers as she wants. The lowest unique bid wins the auction. Every bid has a fixed cost, and once a participant makes a bid, she gets to know whether her bid was unique and whether it was the lowest unique. Information is updated in real time, but every bidder sees only what's relevant to the bids she made. We show that the observed behavior in these auctions differs considerably from what theory would prescribe if all bidders were fully rational. We show that the seller makes money, which would not be the case with rational bidders, and some bidders win the auctions quite often. We describe a  possible strategy for these bidders.

\bigskip

\noindent\emph{Key words}: auctions, interval strategy, rational bidders, bounded complexity, Nash equilibrium.

\end{abstract}

\newpage

\section{Introduction}\label{se:intro}

The development of the Internet has seen the diffusion of many
different types of auctions, some of them with very unusual rules.
One auction method that has become common in many internet sites
is the least-unique-bid auction (LUBA). In this method, bids are
made in cents (of euro)  and each bidder can make as many bids as
she likes, paying a fixed amount for each bid. The winning bid is
the smallest positive amount that was bid by a single bidder; the
bidder who made this bid wins the auction, and pays the winning
bid. If  all positive integers
where bid by either no-one or by at least two bidders, then no-one
wins the auction. This procedure has some features that are common
with all-pay auctions and some other that are common with
lotteries.

Numerous internet sites use LUBA's to auction various objects, ranging from \$100 cash to various electronic gadgets and cars.
In these auctions the bidders can receive real-time information on their bids: they observe for each
bid whether it is currently the winning bid, whether another bidder made the same bid (and therefore it will not win),
or whether currently no-one made the same bid yet it is not the minimal bid with this property (and then it may become the winning bid).

The rules of the auction, quoted from the leading site
http://www.bidster.com/how\_it\_works, are as follows:

\bigskip
\begin{texttt}
Bidster's lowest unique bid auctions differ from regular auctions
in that the lowest and unique  bid wins the auction. Bids can only
be placed in whole pence, and the goal is to place low bids that
are unique when the auction ends. For each bid that you place,
Bidster gives you one of the following three responses:

\begin{enumerate}
\item      \textbf{Your bid is the lowest unique bid}.
If the bid maintains this status, meaning that no bidder places a
bid on the same amount, you will win the auction when it ends.

\item      \textbf{Your bid is unique, but not the lowest}.
To win the auction, all bids that are lower and unique must be
knocked-out of the auction by either you or another bidder placing
the same amount.

\item      \textbf{Your bid is unfortunately not unique}.
Amounts that receive several bids cannot win the auction.

\end{enumerate}

If your bid is the lowest unique at the end of the auction you get
the right to buy the auction item for that bid, which almost
always is a fraction of what the item would otherwise have 
cost you. Bid and win!
\end{texttt}

The motivation to use this auctioning method is that it may (and usually it does) generate more money than the value of the object that is offered.
Usually the winning price, which is the selling price, is low  compared to the value of the object, and is heavily advertized.
Yet as soon as the number of bids times the fixed cost of a bid surpasses the value of the object, the seller makes a profit, regardless of the selling price.

\bigskip

In this paper we analyze least-unique-bid auctions both from a
theoretical and a behavioral point of view.
Observe that the auction is dynamic, and the information that
bidders have evolves with time. The analysis of such a dynamic
auction is too complex, therefore we study a static version of it, where bids are made once and for all simultaneously by
all bidders.

Beyond the complexity of the dynamic setup, one additional motive for restricting ourselves
to a static setup is that  the auction sites publish
only static data (when they publish them at all).\footnote{Attempts to get
more detailed data from the auctioneers have failed.}  As a result, the empirical analysis
must focus on a static model. Nevertheless,
some of our insights extend to the dynamic auction that is used in practice.

Our static model is a traditional auction-theoretic model. We assume that each of a fixed number of bidders
values the auctioned item to its market/resale value, which is available on the site. We assume risk-neutrality and focus on Nash equilibrium analysis. Thus, the auction has known common values.

In this static model we show that the equilibrium payoff of the seller is non-positive; the argument
carries over to the dynamical setting. In the toy case where there are only two bidders,
we show that all symmetric equilibria involve
bidding intervals of numbers. By contrast, when there are at least three bidders, no such equilibrium exists.
Moreover we show with an example that,
although theoretically a seller cannot make money, a strategic bidder can actually make money in this auction.

\bigskip

There  is widespread empirical evidence that contradicts these findings. The mere fact
that  a huge number of sites  offer these
auctions suggests that it is a profitable business.\footnote{For instance the site
http://www.asteribasso.info/home.htm lists fifteen of them just in
Italy.} On the other hand, the data we use suggest that the seller gets a significantly positive return in most of the auctions.

Thus, the
hypotheses of an equilibrium analysis are not satisfactory from a descriptive viewpoint, and we here try to understand why. Data inspection suggests that
bidders adopt a variety of behaviors, going from the totally
non-strategic (not unlike a lottery player) to the entirely
strategic. The many non-strategic bidders are rationally
bounded and do not realize that, without a strategic analysis,
they are worse off than if they played a lottery. This is due to
the presence of strategic bidders, who exploit the na\"\i vet\'e
of non-strategic bidders.

The multitude of non-strategic bidders contribute to the profit of
the seller, just like in an unfair lottery. The question is now
`how can strategic bidders make money?'

A simple behavioral model shows how to reconcile
the existence of a single strategic bidder making money, together with the seller.

Notice that the recognized presence of strategic bidders helps the
seller, because it shows that the auction is not a lottery and
therefore there exist strategies that improve the chance of
winning the object. This may fool semi-strategic bidders, namely,
bidders who realize that some skill is needed to win the auction,
and wrongly assume that they have these skills or that they can
learn them by playing over and over. These bidders, even in the
best circumstances, have a learning curve, and during the learning
phase they probably bid heavily and increase the revenue of the
seller.

In this respect publishing the names of the winners is a good marketing policy for the seller. On one hand this shows that some bidders win more often than others, on the other hand they don't always win, so there is hope for a newcomer to be able to win at some point. Examining the data one notices that some names of winners are
recurring, for example in Bidster.com, among 78 auctions
conducted in December 2009, one person won 11 auctions, and 6 other bidders won together 27 auctions.

Some blog (e.g.,
http://www.pierotofy.it/pages/guide\_tutorials/Internet/La\_veri\-ta\_su\_Bidplaza\_e\_sulle\_aste\_al\_ribasso/)
suggests that strategic bidders bid only for more expensive items.
The list of winners at Bidster.com shows that this is not always
the case, since some of the recurrent winners has won also
auctions for relatively cheap items. However, data suggests that
the bidding behavior in auctions for cheap items is less strategic
than the bidding behavior in auctions for expensive items. The
intuition is that when an item is very cheap, it doesn't make
sense to bid a huge amount on it, so the typical strategy used by
strategic bidders, to bid on large intervals, is not effective.
Therefore auctions on very cheap items are more to be considered like
lotteries.

A typical strategy that is suggested in blogs is a
variation of an equilibrium strategy in the toy model with two
bidders. It involves bidding on an interval $[a, b]$, say. The
rationale for this strategy is as follows. The first goal of a
bidder is to find a relatively small free number $x$; that is, a
number that was not bid so far by any bidder. Given that such a
number is found, the second goal is to kill all the unique bids of
other bidders on numbers lower than $x$; that is, to bid on a
number that was bid by a single bidder, thereby removing it from
the list of potential winning bids. The third goal is to make it
too costly for other bidders to kill her own bid on $x$. To
achieve the second goal, $a$ should be as small as possible, and,
of course, smaller than $x$. To achieve the third goal, $b$ should
be as large as possible, and larger than $x$. A large $b$ has the
additional advantage that other possibly free numbers higher than
$x$ receive a bid, so, if some other bidder kills $x$, some other
possibility of winning the auction is preserved. Having a large
interval $[a, b]$ is costly, so that a strategic bidder faces a
trade-off between increasing the probability of winning and having
a lower bidding cost.

Based on data coming from Bidster.com, we provide some preliminary evidence that
a sophisticated strategy based on the above ideas may indeed perform well.
We proceed by introducing  bidding strategies of the following type. Irrespective of
the value of the item being auctioned, the strategy submits all integer bids between fixed percentages
of the value of the object.  We show that, had these bids been added in the past auctions that we consider,
this strategy would have achieved a limited loss. While somewhat artificial, as we discuss in Section 6, this exercise suggests that when using the dynamic setup in a clever way to limit the bidding costs,
constructing a profitable strategy may be to some extent feasible.
 Pointing at such a winning
strategy is outside the scope of our paper.

\bigskip

Many of the sites that offer LUBA's now offer also penny auctions,
as studied by \citet{Aug:mimeo2009, PlaPriTap:mimeo2009,
Hin:mimeo2010}. The auction mechanism of LUBA's and of penny
auctions has some analogies with all-pay auctions as studied by,
e.g.,
\citet{HenWeiWil:IER1988, BayKovVri:AER1993, Sie:E2009}.

For the classical theory of standard auctions the reader is referred, e.g., to \citet{Kle:Princeton2004, Mil:Cambridge2004, Kri:Academic2009}.

Several papers have examined empirical behavior of bidders in online auctions
(\citealp[see, e.g.,][]{RedDas:SS2006, ShmRusJan:AAP2007} \citealp[and the collection][]{JanShm:Wiley2008}).

Although lowest-unique-bid auctions are a relatively new
phenomenon, there already exist a literature that tries to explain
them.

\citet{OesWanChoCam:mimeo2008, HouvdLVel:mimeo2008, RavVir:IJIO2009,  RapOtsKimSte:mimeo2009} consider different models with the common feature that all bidders are restricted to submit at most one bid. Their results have theoretical interest, but are clearly far away from providing an explanation of the real life development of these auctions.

\citet{EichVin:mimeo2008} allow the possibility of multiple bids, but restrict severely the class of possible strategies for bidders. In their model the number of bidders is known and each bidder can bid only on an interval of numbers starting from zero. They find the symmetric Nash equilibrium for this game and study several data set of German auctions to see how far real life is from their equilibrium model.

\citet{Gal:mimeo2009} studies a dynamic model in discrete time, where bidders receive signals that reflect the behavior of other bidders. He models explicitly the amount of bids that each bidder makes  in a game with an auctioneer and a known number of potential bidders, and shows that, when the agents are rational, the profit of the auctioneer can be positive only if her valuation of the good is much lower than the valuation of the bidders. Then he considers a model where the bidders have bounded rationality.

The idea of a strategic bidder among a multitude of noisy bidders traces back to some models in finance \citep[see, e.g.,][]{Kyl:E1985, Kyl:RES1989}.

The paper is organized as follows.
In Section~\ref{se:model} we present the static game that corresponds to this auction method,
and in Section~\ref{se:strategic} we present an equilibrium analysis of this game.
In Section~\ref{se:behavioral} we propose a behavioral model that better describes
the observed behavior of bidders. In Section~\ref{se:data} we analyze some data of real LUBA auctions. Section~\ref{se:comments} provides concluding remarks.

\section{The setup}\label{se:model}

Bidders participate in an auction for an object whose common value is $v$. We denote by $\mathcal{B} :=\{1, \dots, n\}$ the set of bidders.
Each bidder submits a (finite) set of natural numbers, which are termed her bids;
each bid costs a fixed amount   $c$,
so if a bidder submits a set of $k$ natural numbers, then she pays $kc$.
The bidders' decisions  are made simultaneously and independently.
We denote the set that contains all the bids of bidder $i$ by $S_i$.
The set of all positive integers that were chosen by only one bidder is denoted by $S_*$:
\[
S_* = \{s \in \mathbb{N} \colon \#\{i \in \mathcal{B} \colon s \in S_i\} = 1\}.
\]
If $S_*$ is empty then there is no winner.
If $S_*$ is non-empty then the winning bid is
\[
s_* = \min(S_*),
\]
and the bidder who made it, $i_*$, is termed the ``winner.'' She receives the object and pays the price $s_*$ for it.

Therefore the payoff to each bidder $i$ is
\[
u_i(S_1,S_2,\ldots,S_n) =
\begin{cases}
- c |S_i| +v - s_* &  \text{ if $i = i_*$,}\\
- c |S_i| & \text{ if $i \neq i_*$,}
\end{cases}
\]
where $|S_i|$ is the cardinality of the set $S_i$.

This defines a symmetric game among the bidders. For equilibrium
analysis, we may assume that the set of strategies of each bidder
is bounded. Indeed, in an equilibrium no bidder will bid more than
$v$ on the item, and the number of bids of no bidder will exceed
$v/c$. It is well known that a finite symmetric game
always admits a symmetric equilibrium, i.e., an equilibrium in
which all the bidders play the same strategy, and receive the same
expected payoff. Unless the game is degenerate, the equilibrium
will be in mixed strategies. Usually there are also asymmetric
equilibria.

The payoff to the seller is $c$ times the total number of bids,
plus the winning bid, minus the value of the object:
\[
u_S = \sum_{i=1}^n c |S_i| + s_* - v.
\]
The game between the bidders and the seller is
a zero sum game, since the sum of the payoff to the bidders
is minus the payoff to the seller.

\section{The strategic analysis}\label{se:strategic}

A {\em pure strategy} of a bidder is a choice of a subset of
natural numbers $\mathbb{N}$, and, as mentioned above, we can
restrict each bidder to a finite subset of pure strategies. As
usual, a {\em mixed strategy} is a probability distribution over
pure strategies. A mixed strategy is called {\em monotone} if all
the subsets in its support are of the form $\{1,2,\ldots,k\}$, for
some $k \in
\mathbb{N}$.

In the next subsection we analyze the game with two bidders.
Plainly this analysis does not arise in practice, but it
highlights the role of monotone strategies in equilibrium. In the
subsequent subsection we show that when there are more than two
bidders, there are no equilibria in monotone strategies, so in
particular the computational complexity of equilibrium strategies
is huge.

\subsection{Two bidders}

In this subsection we study a toy model with only two bidders.
Our first result shows that in this case {\em all} equilibria are in monotone strategies.
We then identify the unique symmetric equilibrium.

\begin{theorem}\label{th:monotone}
When $k=2$, all equilibria are in monotone strategies.
\end{theorem}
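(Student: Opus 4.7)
The plan is to prove the theorem by a one-element swap argument on the support of each bidder's mixed equilibrium strategy. Fix an equilibrium $(\sigma_1,\sigma_2)$ and suppose, toward a contradiction, that some pure strategy $S$ in $\supp(\sigma_1)$ is not monotone. Then $\ell:=\min(\mathbb{N}\setminus S)$ satisfies $\ell<m$ for some $m\in S$; choose $m$ to be the smallest such element. The swapped strategy $S':=(S\setminus\{m\})\cup\{\ell\}$ has the same cardinality, and hence the same bidding cost, as $S$.

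The bulk of the work is a pointwise comparison of $u_1(S,S_2)$ with $u_1(S',S_2)$ for every realization $S_2$ of bidder~$2$'s bid set. I would split into the four cases indexed by $\{\ell,m\}\cap S_2$. In each case, writing $T:=(S\setminus S_2)\setminus\{\ell,m\}$ and $U:=(S_2\setminus S)\setminus\{\ell,m\}$ for the other unique bids of the two bidders, and setting $a:=\min T$ and $b:=\min U$ (with $\min\emptyset=\infty$), I would identify the minimum of $S_*$ and its owner under both $S$ and $S'$. Using $\ell<m$, a short case check shows $u_1(S',S_2)\ge u_1(S,S_2)$ for every $S_2$, with strict inequality in explicit scenarios; for example, in the case $\ell\in S_2$, $m\notin S_2$, whenever $\ell<a<b$, bidder~$2$ wins under $S$ with her bid $\ell$, whereas bidder~$1$ wins under $S'$ with her bid $a$.

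The main obstacle is lifting this pointwise weak dominance to a strict inequality in expectation against $\sigma_2$. I would argue that in equilibrium, $\sigma_2$ must put positive probability on at least one of the strict-improvement sub-cases above: otherwise $\sigma_2$ would be concentrated on bid sets that essentially ignore the range $[\ell,m]$, and an analogous swap on bidder~$2$'s side would then show that $\sigma_2$ itself is dominated by a cheaper monotone strategy, contradicting the hypothesis that $\sigma_2$ is a best response. The resulting strict inequality $u_1(S',\sigma_2)>u_1(S,\sigma_2)$ contradicts $S\in\supp(\sigma_1)$; hence $\supp(\sigma_1)$ consists only of monotone strategies, and the analogous argument with the roles of the two bidders exchanged settles $\sigma_2$.
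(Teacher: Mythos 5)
Your first half---the one-element swap $S \mapsto S' = (S\setminus\{m\})\cup\{\ell\}$ and the pointwise verification that $u_1(S',S_2)\ge u_1(S,S_2)$ for every $S_2$---is essentially the first half of the paper's own proof (the paper swaps $l+1$ for the gap $l$; the choice of which element to move is immaterial). The genuine gap is in your final paragraph, where you try to upgrade weak dominance to a strict inequality in expectation. Your claim that, absent any strict-improvement realization, ``$\sigma_2$ is concentrated on bid sets that essentially ignore the range $[\ell,m]$'' mischaracterizes the condition: the swap fails to improve against a given $S_2$ exactly when either $S_2$ omits some integer below $\ell$ (so bidder 1 already wins with a bid $<\ell$ and the sets agree there) or $S_2\supseteq\{1,\dots,\ell\}$ and bidder 1 still cannot win above $\ell$; neither alternative says anything about $S_2$ on $[\ell,m]$. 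Worse, the proposed remedy---``an analogous swap on bidder 2's side''---is unavailable precisely when $\sigma_2$ is already monotone (e.g.\ concentrated on $\varnothing$ and $\{1\}$), and when $\sigma_2$ is not monotone the appeal is circular, since monotonicity of $\sigma_2$ is part of what is being proved. Weak dominance alone cannot finish the job: a mixed equilibrium may legitimately put positive weight on several payoff-equivalent pure strategies.

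The missing idea is a \emph{second} deviation. If no $S_2$ in $\supp(\sigma_2)$ triggers a strict improvement for the swap, then against every $S_2$ in the support bidder 1 either wins with a bid below $\ell$ or loses; in both events every bid of hers at or above $\ell$ is irrelevant to her gain (deleting her own bids can neither create a new winning bid for her nor alter a win that is decided below $\ell$). Hence the truncation $S''=S\cap\{1,\dots,\ell-1\}$ achieves the same gain against all of $\supp(\sigma_2)$ at a cost lower by at least $c$, and this is the strict improvement that contradicts $S\in\supp(\sigma_1)$. This swap-or-truncate dichotomy is exactly how the paper closes the argument; without the truncation step yours does not close.
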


\begin{theorem}\label{th:strategy}
When $k=2$, there is a unique symmetric equilibrium $(\sigma,\sigma)$.
Let $N$ be the maximal natural number that satisfies
\begin{equation}\label{eq:equilstrat2play}
\frac{c}{v-1} + \frac{c}{v-2} + \cdots + \frac{c}{v-N} < 1.
\end{equation}
The strategy $\sigma$ is defined by
\begin{enumerate}[{\rm (i)}]
\item
$\sigma(\varnothing)=\displaystyle \frac{c}{v-1}$,
\item
$\sigma(\{1,\cdots,l\})=\displaystyle\frac{c}{v-(l+1)}$ for $l=1,2,\ldots, N-1$,
\item
$\sigma(\{1,\cdots,N\})=1-\displaystyle\sum_{l=0}^{N-1}\frac{c}{v-(l+1)}$.
\end{enumerate}
\end{theorem}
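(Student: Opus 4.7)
By Theorem~\ref{th:monotone} any equilibrium strategy is supported on the monotone sets $\{1,\ldots,l\}$, $l=0,1,\ldots$ (with the convention $\{1,\ldots,0\}:=\varnothing$). A candidate symmetric equilibrium $(\sigma,\sigma)$ is therefore parametrised by the weights $q_l:=\sigma(\{1,\ldots,l\})$. A short case analysis on the relative lengths of the two bid sets (both empty, both of equal positive length, or one strictly longer than the other) shows that playing $\{1,\ldots,m\}$ against $\sigma$ yields expected payoff
\[
\pi(0)=0, \qquad \pi(m)=\sum_{l=0}^{m-1}q_l(v-l-1)-cm \quad (m\geq 1),
\]
and hence one-step increments
\[
\Delta\pi(m):=\pi(m+1)-\pi(m)=q_m(v-m-1)-c.
\]

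The crux of the proof is to show that the support $A$ of $\sigma$ is an initial segment $\{0,1,\ldots,N\}$. Three observations suffice. (a) If $0\notin A$ and $m_1=\min A$, then $q_{m_1-1}=0$ gives $\Delta\pi(m_1-1)=-c<0$, hence $\pi(m_1-1)>\pi(m_1)$, contradicting optimality of $m_1$. (b) If $m,m+2\in A$ but $m+1\notin A$, then $q_{m+1}=0$ and the indifference $\pi(m+2)=\pi(m)$ force $q_m(v-m-1)=2c$; consequently $\Delta\pi(m)=c$, so $\pi(m+1)>\pi(m)$, again a contradiction. (c) Since $\pi(m)\leq v-1-cm\to-\infty$ as $m\to\infty$, the support is necessarily finite.

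Given $A=\{0,\ldots,N\}$, the indifferences $\Delta\pi(l)=0$ for $l=0,\ldots,N-1$ force $q_l=c/(v-l-1)$, which matches (i)--(ii); item (iii) is then the normalisation $q_N=1-\sum_{l<N}q_l$. The twin requirements $q_N>0$ and $\Delta\pi(N)\leq 0$ (so that bidding $\{1,\ldots,N+1\}$ is not profitable) translate, after the reindexing $k=l+1$, into
\[
\sum_{k=1}^{N}\frac{c}{v-k}<1\leq\sum_{k=1}^{N+1}\frac{c}{v-k},
\]
identifying $N$ as the maximal integer satisfying~\eqref{eq:equilstrat2play} and pinning $\sigma$ down uniquely. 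Conversely, the $\sigma$ defined by (i)--(iii) makes $\pi$ constant on $\{0,\ldots,N\}$ and non-increasing for $m\geq N$, so it is a best response to itself among monotone strategies; that the best-response property extends to all strategies follows from (the proof of) Theorem~\ref{th:monotone}.

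The main obstacle is the structural step (a)--(b): ruling out an initial gap at $0$ and internal gaps in the support. Once the support has the claimed shape, the indifference system is linear and both the explicit formulas for $q_l$ and the uniqueness of $N$ follow immediately.
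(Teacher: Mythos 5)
Your proposal follows essentially the same route as the paper: invoke Theorem~\ref{th:monotone} to reduce to monotone supports, write the expected payoff of the pure bid $\{1,\ldots,m\}$ against $\sigma$ (your $\pi(m)$ is exactly the paper's zero-payoff identity~\eqref{eq:zeropayoff}), show that the support is an initial segment containing $\varnothing$, and solve the resulting indifference system for the weights and for $N$. The two places where you diverge are improvements in presentation rather than in substance: organizing the support argument around the increment $\Delta\pi(m)=q_m(v-m-1)-c$ unifies what the paper does in two separate ad hoc domination arguments (its Steps 1 and 2), and your closing paragraph explicitly verifies that the candidate $\sigma$ is a best response to itself, whereas the paper only proves necessity and implicitly relies on the general existence of a symmetric equilibrium in finite symmetric games. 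Your appeal to the proof of Theorem~\ref{th:monotone} to pass from ``best response among monotone strategies'' to ``best response among all strategies'' is legitimate, since that proof produces, against any fixed opponent strategy, a monotone set weakly improving on any given non-monotone one.

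One step needs tightening. Observation (b) as stated only excludes gaps of length exactly one in the support (configurations $m,m+2\in A$, $m+1\notin A$), and together with (a) and (c) this does not force $A$ to be an initial segment: for instance $A=\{0,3,4\}$ satisfies all three conditions as written. The repair is the same computation applied to consecutive support points: if $m<m'$ are adjacent elements of $A$ with $m'\geq m+2$, then $q_{m+1}=\cdots=q_{m'-1}=0$, and the indifference $\pi(m')=\pi(m)$ gives $q_m(v-m-1)=(m'-m)c$, hence $\Delta\pi(m)=(m'-m-1)c>0$ and $\pi(m+1)>\pi(m)$, contradicting the optimality of $m$. With that adjustment the argument is complete and correct.
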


\begin{proof}[Proof of Theorem~\ref{th:monotone}]
Let $(\sigma_1,\sigma_2)$ be an arbitrary equilibrium. We argue by
contradiction and assume that, say, $\sigma_1$ is not a monotone
strategy. That is, there is a subset $\bar{S}_1\subset \mathbb{N}$
not of the form $\{1,2,\ldots,k\}$, such that $\sigma(\bar{S}_{1}) >
0$. It follows that there is an integer $l\geq 1$ such that $l\notin
\bar{S}_1$ but $l+1\in \bar{S}_1$.

Denote by $\bar{S}'_1$ the set obtained by replacing $l+1$ with $l$
in $\bar{S}_1$, that is,
$\bar{S}'_1=\left(\bar{S}_1\setminus\{l+1\}\right)\cup\{l\}$.
Denote by $\bar{S}''_1 = \bar{S}_1 \cap \{1,2,\ldots,l-1\}$
the set obtained by removing from $\bar{S}_1$ all bids higher than $l-1$.
Plainly,
\[ |\bar{S}''_1| < |\bar{S}_1| = |\bar{S}'_1|. \]

Given two subsets $S_1,S_2\subset \mathbb{N}$,
we say that $S_1$  \emph{wins} against $S_2$, if bidder 1 wins when she bids $S_1$ and her opponent bids $S_2$.
The following table lists
three possible cases that may occur when
each of the three sets $\bar{S}_1$, $\bar{S}'_1$ and $\bar{S}''_1$ is used by bidder 1,
and the bid of bidder 2 is some set $S_2$.

\[
\begin{array}{l||l|l|}
\bar{S}_1 & \bar{S}'_1 & \bar{S}''_1\\
\hline
\hbox{does not win} & \hbox{may win or lose} & \hbox{does not win}\\
\hbox{wins, winning bid} \geq l+1 & \hbox{wins, winning bid} = l& \hbox{does not win}\\
\hbox{wins, winning bid} < l & \hbox{wins, winning bid} < l & \hbox{wins, winning bid} < l
\end{array}
\]

\noindent
Indeed, if $\bar{S}_1$ does not win against $S_2$ then $\bar{S}''_1$,
which is a subset of $\bar{S}_1$, does not win against $S_2$, either.
However, $\bar{S}'_1$ may win against $S_2$, in case $S_2$ does not bid on $l$,
and the winning bid when the bidders use $\bar{S}_1$ and $S_2$ is higher than $l$.
If $\bar{S}_1$ wins against $S_2$ and the winning bid is at least $l+1$,
then as above $\bar{S}''_1$ does not win against $S_2$.
But in this case $\bar{S}'_1$ wins against $S_2$:
since the winning bid is at least $l+1$ and  $\bar{S}_1$ does not bid on $l$,
it follows that $l \not\in S_2$,
so that $l$ is the winning bid when the bidders choose $\bar{S}'_1$ and $S_2$.
Finally, if $\bar{S}_1$ wins against $S_2$ and the winning bid is smaller than $l$,
then since
\[
\bar{S}_1 \cap \{1,2,\ldots,l-1\} = \bar{S}'_1 \cap \{1,2,\ldots,l-1\} = \bar{S}''_1 \cap \{1,2,\ldots,l-1\},
\]
it follows that the outcome is the same,
whether bidder 1 uses $\bar{S}_1$, $\bar{S}'_1$ or $\bar{S}''_1$.

As we see, $\bar{S}'_1$ does at least as good as $\bar{S}_1$.
In fact, if there is some $S_2$ that is chosen with positive probability by $\sigma_2$,
such that $\bar{S}_1$ wins against $S_2$ and the winning bid is at least $l+1$,
then $\bar{S}'_1$ does strictly better than $\bar{S}_1$,
contradicting the equilibrium property of $\sigma_1$.

If, on the other hand, for every $S_2$ that is chosen with positive probability by $\sigma_2$,
whenever $\bar{S}_1$ wins against $S_2$ the winning bid is less than $l$,
than $\bar{S}''_1$ does at least as good as $\bar{S}_1$
but it contains less bids, and therefore it achieves higher profit than $\bar{S}_1$,
again, contradicting the equilibrium property of $\sigma_1$.

This implies that the assumption is incorrect: there is no such subset $\bar{S}_1$ that is played with positive
probability under $\sigma_1$. In other words, $\sigma_1$ is monotone.
\end{proof}

\begin{proof}[Proof of Theorem~\ref{th:strategy}]
Let $(\sigma,\sigma)$ be a symmetric equilibrium. By Theorem~\ref{th:monotone}, $\sigma$ is a monotone strategy.
We proceed in three steps.

\medskip

\noindent
\textbf{Step 1}. Let $S=\{1,\ldots, l\}$ be a non-empty set such that $\sigma(S)>0$, and define $S':=\{1,\ldots, l-1\}$.
We claim
that $\sigma(S')>0$.

Indeed, otherwise the other bidder bids on $l-1$ only when she bids
on $l$.
Therefore when facing $\sigma$, bidding $S'$ would win exactly
when $S$ would win. Since $|S'|<|S|$, the expected payoff of
bidder 1 is higher when facing $\sigma$ and bidding $S'$ rather
than $S$: a contradiction.

\medskip

We deduce that if $\sigma$ does not assign probability 1 to  not participating,
that is, if $\sigma(\varnothing) < 1$,
then $\sigma$ assigns a positive probability to some set of the form $\{1,2,\ldots,l\}$,
and so by an iterative use of step 1,
$\sigma$ assigns a positive probability to the set $\{1\}$.

\medskip

\noindent
\textbf{Step 2}. $\sigma(\varnothing)>0$. In particular, the expected payoff of each bidder is zero.

Because the other bidder uses $\sigma$, which is a monotone strategy,
whenever the other bidder participates she bids on $1$.
As mentioned above, $\sigma$ assigns a positive probability to bidding $\{1\}$.
But the bid $\{1\}$ is winning only if the other bidder does not participate.
Because the cost of this bid is $c$, and the bidder can receive at least 0 by not participating, she must win the auction with positive probability when she bids $\{1\}$. This implies that $\sigma$ assigns a positive probability to not participating, as claimed.

\bigskip

\noindent
\textbf{Step 3}. (\emph{Characterization of $\sigma$})
Let $N$ be the largest integer such that $\sigma(\{1,\ldots,
N\})>0$. By \textbf{Steps 1} and \textbf{2}, when facing $\sigma$,
the expected payoff of a bidder when bidding $\{1,\dots, l\}$ is
equal to zero, for each $l=1,\ldots, N$. The cost of such a bid is
equal to $nc$. On the other hand, the expected gain can be
computed as follows. With probability $\sigma(\varnothing)$, the
opponent bidder will not post any bid, the winning bid is 1, and
the gain is thus $v-1$; for $i<l$, the probability that the
opponent will bid $ \{1,\ldots, i\}$ is $\sigma(\{1,\ldots, i\})$.
In that case, the winning bid is $i+1$, and the gain, equal to
$v-(i+1)$. The zero payoff condition thus yields
\begin{equation}\label{eq:zeropayoff}
lc=\sigma(\varnothing)(v-1)+\sum_{i=1}^{l-1}\sigma(\{1,\ldots, i\}) (v-(i+1)), \quad\text{for all}\ l \in \{1,2,\ldots,N\}.
\end{equation}
For $l=1$ we obtain
\begin{equation*}
\sigma(\varnothing)=\frac{c}{v-1}.
\end{equation*}
When subtracting the expression in \eqref{eq:zeropayoff} for $l$ from the one for $l+1$, one gets
\begin{equation*}
\sigma(\{1,\ldots, l\}=\frac{c}{v-(l+1)}\quad \text{for each }l=1,\ldots, N-1.
\end{equation*}
Because $\sigma$ is a probability distribution we deduce that
\[
\sigma(\{1,\ldots, N\})=1- \sum_{i=1}^{N}\frac{c}{v-i}.
\]
In particular, this implies that $N$ is such that
\[
1- \sum_{i=1}^{N}\frac{c}{v-i}< 1.
\]
On the other hand, by the equilibrium condition, the payoff of a bidder,
when facing $\sigma$ and bidding $\{1,\ldots, N+1\}$, is non-positive.
Arguing as above, this yields
\[
1- \sum_{i=1}^{N+1}\frac{c}{v-i}\geq 1.
\]
This uniquely characterizes $N$ as the largest integer such that
\[
1- \sum_{i=1}^{N}\frac{c}{v-i}< 1,
\]
and concludes the proof of Theorem~\ref{th:strategy}.
\end{proof}

\subsection{Three bidders or more}

The following theorem,
which states that when there are at least three bidders there is no symmetric equilibrium in monotone strategies,
stands in sharp contrast to the results in the previous section.

\begin{theorem}\label{th:threebidders}
If the number of bidders is at least three, and if $v>\max\{2c+2,10\}$, then there is no symmetric equilibrium in
monotone strategies.
\end{theorem}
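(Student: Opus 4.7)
The plan is to assume for contradiction that a symmetric monotone equilibrium $(\sigma,\ldots,\sigma)$ exists and then exhibit a strictly profitable non-monotone pure deviation. Write $p_l:=\sigma(\{1,\ldots,l\})$ for $l\geq 0$ (with the convention $\{1,\ldots,0\}:=\varnothing$) and $F(k):=p_0+\cdots+p_k$; for a generic opponent $j$ let $k_j$ be the random level of her bid $\{1,\ldots,k_j\}$, and set $M:=\max_{j\neq i}k_j$.

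First I would show that the support of $\sigma$ has the form $\{\varnothing,\{1\},\ldots,\{1,\ldots,N\}\}$ with every $p_l>0$, by adapting Step~1 of the proof of Theorem~\ref{th:monotone}. If $p_l>0$ but $p_{l-1}=0$, then against opponents using $\sigma$ the bids $\{1,\ldots,l\}$ and $\{1,\ldots,l-1\}$ win on events of equal probability (the only event on which their winning criteria differ, $M=l-1$, has probability zero) and pay the same winning bid when they win, so the shorter bid is strictly better, contradicting $p_l>0$. A separate argument gives $p_0>0$: if $p_0=0$, every opponent bids on $1$ almost surely, so $1$ is never unique, $\{1\}$ is a sure loss, $p_1$ must therefore vanish, and iterating the no-gap argument collapses the support entirely---a contradiction.

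With the support identified, indifference with $\varnothing$ forces the equilibrium payoff to be $0$, and first-differencing the zero-payoff condition between $\{1,\ldots,l\}$ and $\{1,\ldots,l-1\}$ yields
\[
F(l-1)^{n-1}-F(l-2)^{n-1}=\frac{c}{v-l},\qquad l=1,\ldots,N,
\]
so in particular $p_0^{n-1}=c/(v-1)$ and $(p_0+p_1)^{n-1}=c/(v-1)+c/(v-2)$. A short check using $v>2c+2$ rules out $N\leq 1$: under any such candidate equilibrium, bidding $\{1,2\}$ would produce the strictly positive payoff $-2c+v-2+c/(v-1)$, contradicting the maximality of $N$. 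Hence $N\geq 2$ and $p_1>0$.

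The main step, which I expect to be the crucial obstacle (one has to guess the right non-monotone deviation), is to evaluate the payoff of the pure strategy $\{2\}$. A bidder using $\{2\}$ wins iff $2\in S_*$ (no opponent bids on $2$, i.e.\ $k_j\leq 1$ for every $j$) and $1\notin S_*$ (the number $K:=\#\{j:k_j=1\}$ differs from $1$). Subtracting the $K=1$ term from the binomial sum gives
\[
\Pr(\text{win with }\{2\})=(p_0+p_1)^{n-1}-(n-1)\,p_0^{n-2}\,p_1,
\]
and substituting the two identities above simplifies the expected payoff of $\{2\}$ to
\[
\frac{c(v-2)}{v-1}\Bigl(1-(n-1)\frac{p_1}{p_0}\Bigr).
\]
This is strictly positive iff $(p_0+p_1)/p_0<n/(n-1)$, equivalently $(2v-3)/(v-2)<(n/(n-1))^{n-1}$. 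Since $(1+1/(n-1))^{n-1}$ is increasing in $n$, the binding case is $n=3$, where the inequality reduces to $v>6$, comfortably implied by $v>10$. Hence $\{2\}$ strictly beats the equilibrium payoff $0$, and no symmetric monotone equilibrium can exist, proving the theorem.
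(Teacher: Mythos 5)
Your proof is correct and follows essentially the same route as the paper: establish the no-gap support structure and zero equilibrium payoff, extract $p_0^{n-1}=c/(v-1)$ and $(p_0+p_1)^{n-1}=c/(v-1)+c/(v-2)$ from indifference at $\{1\}$ and $\{1,2\}$, and then show the non-monotone deviation $\{2\}$ is strictly profitable. The only (cosmetic) difference is that you keep the exact expression and reduce the contradiction to $\frac{2v-3}{v-2}<\left(\frac{n}{n-1}\right)^{n-1}$, which needs only $v>6$, whereas the paper bounds $\frac{c}{v-1}\leq\frac{c}{v-2}$ and therefore invokes $v>10$.
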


The condition $v>\max\{2c+2,10\}$ means that the value of the object is not too low, 
and is always satisfied in practice.
Theorem~\ref{th:threebidders} suggests that when there are at least three bidders
the equilibrium is significantly more complex than the equilibria for two bidders.

\begin{proof}[Proof of Theorem~\ref{th:threebidders}]
Let the number of bidders be $k\geq 3$. Arguing by contradiction, we assume that there is an equilibrium $(\sigma,\cdots, \sigma)$, where $\sigma$ is a monotone strategy.

We  follow the proof of Theorem~\ref{th:strategy}. Observe that \textbf{Steps 1} and \textbf{2}
in that proof are valid irrespective of the number of bidders.
In particular, from Step 2 the expected payoff of each bidder is 0.
We now adapt the computation in \textbf{Step 3}.

Let $N$ be the largest integer such that $\sigma(\{1,\ldots, N\})>0$.
Since $v>2c+2$, one has $N\geq 2$.
Otherwise $\sigma$ bids only on the empty set $\varnothing$ or on the set $\{1\}$.
But then, when all other bidders use $\sigma$,
the bid $\{1,2\}$ wins with probability 1, and
yields a positive expected payoff of at least $v-2c-2$.
This contradicts the equilibrium condition.

For the sake of readability, we set $x_\varnothing=\sigma(\varnothing)$ and $x_i=\sigma(\{1,\ldots,i\})$, for $i=1,\ldots, N$.
The proof is organized as follows. First we write the zero payoff condition for the two bids $\{1\}$ and $\{1,2\}$. We next show that the bid $\{2\}$ then yields a positive expected payoff when facing $(\sigma,\ldots, \sigma)$, and is therefore a profitable deviation.

The bid $\{1\}$ costs $c$ and is winning if and only if all other
$k-1$ bidders fail to submit a bid -- an event that occurs with
probability $x_\varnothing^{k-1}$. In that case, the winning bid is
equal to 1. Thus,
\begin{equation}\label{eq:payoff1}
c=(v-1)x_\varnothing^{k-1}.
\end{equation}
The bid $\{1,2\}$ costs $2c$ and is winning if either all other
$k-1$ bidders fail to submit a bid, or if at least one of them
submits a non-empty bid, and all of those who submit a non-empty
bid actually submit the bid $\{1\}$. The probability of the former
event is $x_\varnothing^{k-1}$, and the probability of the latter
one is $(x_\varnothing +x_1)^{k-1}-x_\varnothing^{k-1}$. The winning
bid is equal to 1 in the former case, and it is equal to 2 in the
latter. Thus, the zero payoff condition of the bids $\{1,2\}$
becomes
\begin{equation}\label{eq:payoff12}
2c= (v-1)x_\varnothing^{k-1}+(v-2) \left((x_\varnothing +x_1)^{k-1}-x_\varnothing^{k-1}\right).
\end{equation}
Given \eqref{eq:payoff1}, it is not difficult to see that \eqref{eq:payoff12} is equivalent to
\begin{equation}\label{eq:payoff12bis}
c= (v-2) \left((x_\varnothing +x_1)^{k-1}-x_\varnothing^{k-1}\right)=(v-2)\left((x_\varnothing +x_1)^{k-1} -\frac{c}{v-1}\right).
\end{equation}
From \eqref{eq:payoff1} and \eqref{eq:payoff12bis} one therefore deduces
\begin{equation}\label{eq:proba12}
x_\varnothing= \sqrt[k-1]{\frac{c}{v-1}}\quad \text{and}\quad x_1=\sqrt[k-1]{\frac{c}{v-2}+\frac{c}{v-1}}-\sqrt[k-1]{\frac{c}{v-1}}.
\end{equation}
We now consider the bid $\{2\}$. When facing $(\sigma,\ldots,
\sigma)$, this bid is winning if each of the other bidders either
submits an empty bid or the bid $\{1\}$, and the number of
bidders who submit the bid $\{1\}$ is \emph{not} equal to 1 (an
event that occurs with probability
$(x_\varnothing+x_1)^{k-1}-(k-1)x_1 x_\varnothing^{k-2}$), and the
winning bid is then equal to 2. By the equilibrium condition, this
bid does not yield a positive payoff. Thus, a necessary
equilibrium condition is
\[c \geq (v-2)\left((x_\varnothing +x_1)^{k-1}-(k-1)x_1 x_\varnothing^{k-2}\right),\]
which, in light of the previous equalities, can be written as
\[(k-1)x_1x_\varnothing^{k-2}\geq \frac{c}{v-1},\]
or equivalently, by \eqref{eq:payoff1},
\begin{equation}\label{eq:neccond}
(k-1)x_1\geq x_\varnothing.\end{equation}
Since
\[
\frac{c}{v-1}\leq \frac{c}{v-2},
\]
\eqref{eq:proba12} and \eqref{eq:neccond} imply
\[(k-1)\sqrt[k-1]{\frac{2c}{v-2}}\geq k\sqrt[k-1]{\frac{c}{v-1}},\]
which is equivalent to
\begin{equation}\label{eq:neccond2}
2\left(1+\frac{1}{v-2}\right)\geq
\left(1+\frac{1}{k-1}\right)^{k-1}.\end{equation} Since $v>10$,
the left-hand side of \eqref{eq:neccond2} is less than $9/4$. On
the other hand, the right-hand side of \eqref{eq:neccond2} is
increasing in $k$ and is equal to $9/4$ for $k=3$. Hence
(\ref{eq:neccond2}) cannot possibly hold. This proves the desired
contradiction.
\end{proof}

\subsection{Equilibrium payoffs}

In this subsection we discuss the equilibrium payoffs when the number of bidders is arbitrary.
Our first observation is that the seller cannot make
money.

\begin{theorem}
\label{th:nomoney}
In all equilibria, the expected payoff to the seller is non-positive.
\end{theorem}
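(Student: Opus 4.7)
The plan is to exploit the fact that the game between the bidders and the seller is zero sum, as already noted in Section~\ref{se:model}. Explicitly, for any strategy profile one has
\[
u_S + \sum_{i=1}^n u_i(S_1,\dots,S_n) = 0,
\]
regardless of whether a winner exists (if $S_*=\varnothing$, then the seller keeps the object and collects the bid fees, so $u_S=c\sum_i|S_i|$ and $\sum_i u_i=-c\sum_i|S_i|$; if $S_*\neq\varnothing$, the winning transaction adds $s_*-v$ to the seller and $v-s_*$ to the winner, so the identity still holds). Taking expectations under any strategy profile gives $\mathbb{E}[u_S]=-\sum_i \mathbb{E}[u_i]$.

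Next I would invoke the ``no participation'' deviation. Each bidder has the pure strategy $S_i=\varnothing$ available: not bidding costs nothing and secures a payoff of $0$. Hence at any Nash equilibrium $(\sigma_1,\dots,\sigma_n)$, the equilibrium condition implies
\[
\mathbb{E}_{\sigma_1,\dots,\sigma_n}[u_i] \ge \mathbb{E}_{\sigma_{-i}}[u_i(\varnothing,S_{-i})] = 0,
\]
since bidder $i$'s payoff from not bidding is $0$ irrespective of what the others do.

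Summing this inequality over $i=1,\dots,n$ yields $\sum_i \mathbb{E}[u_i]\ge 0$, and combining with the zero-sum identity gives $\mathbb{E}[u_S]\le 0$, which is exactly the claim. I expect no real obstacle here: the two ingredients (zero-sum structure of the bidder-versus-seller game and the individually rational option of not bidding) are immediate from the model in Section~\ref{se:model}, so the proof reduces to one line once these observations are stated carefully. The only point worth being explicit about is handling the no-winner event in the zero-sum identity, which I would mention in passing before concluding.
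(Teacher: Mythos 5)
Your proposal is correct and follows exactly the paper's own argument: the game between the bidders and the seller is zero-sum, each bidder can guarantee a payoff of $0$ by not participating, so equilibrium payoffs of the bidders are non-negative and the seller's is non-positive. The extra care you take with the no-winner event is a harmless elaboration of the same one-line proof.
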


\begin{proof}
Consider the zero-sum game with $n$ bidders and one seller.
Each bidder $i$ has the option not to participate in the auction (set $S_i = \varnothing$)
and obtain 0.
Therefore in all equilibria, the expected payoff of each bidder is non-negative.
Since the game is zero-sum, it follows that the expected payoff of the seller is non-positive.
\end{proof}

This result carries over to the dynamic setup, in which information is revealed to the bidders:
the same argument shows that in this more elaborated game,
in all equilibria the expected payoff of each bidder is non-negative,
and therefore the expected payoff of the seller is non-positive.
It is interesting to contrast this result with real data:
according to the information provided by, e.g., Bidster.com,
the total cost of the bids far exceeds the value of the object,
and the expected payoff to the seller is positive (see Section~\ref{se:data} for a more elaborate discussion on this issue).
This shows that at present the participants in the auctions do not play a Nash equilibrium.
A possible explanation to this phenomenon may be that the bidders' utility is different
than the one presented here: e.g., they may be risk averse/lovers,
or they may have positive utility from participating in the game.

A natural question is whether the bidders can profit from participating.
Interestingly, if the number of participants is high,
then the expected payoff of at least one bidder is 0.
This implies in particular that in any symmetric equilibrium the expected payoff of all bidders is 0.

\begin{proposition}\label{pr:prop1}
If the number of bidders $n$ exceeds $v/c$,
then in any equilibrium, the expected payoff of at least one bidder is  $0$.
\end{proposition}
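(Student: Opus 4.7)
The plan is to show that when $n>v/c$, at least one bidder must place the empty bid $\varnothing$ with positive probability in equilibrium, and then to invoke the indifference property of mixed equilibria to conclude that her expected payoff is $0$.

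The first step I would carry out is an aggregate-surplus bound. For every realization of the strategy profile,
$$\sum_{i=1}^n u_i = (v - s_*)\,\mathbf{1}_{\{S_*\neq\varnothing\}} - c\sum_{i=1}^n |S_i| \;\leq\; v - c\sum_{i=1}^n |S_i|,$$
since $v-s_*\leq v$ on the event that there is a winner, and the first summand is $0$ otherwise. Taking expectations and combining this with the outside-option bound $\mathbb{E}[u_i]\geq 0$ (each bidder can always play $\varnothing$, which gives payoff $0$), I obtain
$$0 \;\leq\; \sum_{i=1}^n \mathbb{E}[u_i] \;\leq\; v - c\,\mathbb{E}\!\left[\sum_{i=1}^n |S_i|\right],$$
so that $\sum_i \mathbb{E}[|S_i|]\leq v/c < n$.

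Next I apply pigeonhole to pick a bidder $i_0$ with $\mathbb{E}[|S_{i_0}|]<1$. Markov's inequality then gives
$$\mathbb{P}(S_{i_0}\neq\varnothing) \;\leq\; \mathbb{E}[|S_{i_0}|] \;<\; 1,$$
so $\mathbb{P}(S_{i_0}=\varnothing)>0$; the pure strategy $\varnothing$ lies in the support of $i_0$'s equilibrium mixed strategy.

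Finally, I invoke the best-response characterization of Nash equilibrium: every pure strategy in the support of $i_0$'s mixed strategy yields the same expected payoff, namely her equilibrium payoff $\mathbb{E}[u_{i_0}]$. Since the pure strategy $\varnothing$ deterministically produces payoff $0$ regardless of the other bidders' choices, we conclude $\mathbb{E}[u_{i_0}]=0$, as required. The proof has no real obstacle: the only two ingredients are the aggregate-surplus inequality (a direct consequence of the payoff definitions) and the indifference principle, and the pigeonhole step is driven purely by the hypothesis $n>v/c$.
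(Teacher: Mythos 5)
Your proof is correct and follows essentially the same route as the paper's: both arguments rest on the observation that in equilibrium the aggregate bidding cost cannot exceed $v$ (the paper invokes Theorem~\ref{th:nomoney} for this; you rederive it inline via the surplus bound $\sum_i u_i \leq v - c\sum_i |S_i|$ together with the outside-option inequality), and both conclude by noting that a bidder who places $\varnothing$ with positive probability must have equilibrium payoff $0$ by the indifference principle. The only cosmetic difference is that the paper argues by cases (either some bidder abstains with positive probability, or all bid almost surely and the seller profits, a contradiction), whereas you extract the abstaining bidder quantitatively via pigeonhole and Markov's inequality.
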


\begin{proof}
If there is a positive probability that some bidder, say bidder $i$,
will not participate (that is, she will set $S_i = \varnothing$),
then, as mentioned above, her expected payoff is 0.

Otherwise, all the bidders participate with probability 1.
This implies that the total number of bids exceeds $v/c$,
but this would make the expected payoff to the seller positive,
which is ruled out by Theorem~\ref{th:nomoney}.
\end{proof}

The argument in the proof of Proposition~\ref{pr:prop1} implies that
in every equilibrium, if the expected number of bidders who participate exceeds $v/c$
then the expected payoff of all bidders is 0.
Surprisingly, if the expected number of bids is smaller than $v/c$,
then some bidders can make a profit.
This is illustrated by the next example.

\begin{example}
Suppose that there are two bidders $n=2$, the bidding cost is $c=1$, and the value of the object is $v=4$.

Bidder 1 always bids $1$, whereas bidder 2 uses the following strategy:
with probability $\frac{1}{2}$ she bids $\{1,2\}$ and with probability $\frac{1}{2}$ she does not participate
(she bids $\varnothing$).

Let us verify that this is an equilibrium.
If bidder 2 bids $\{1,2\}$ she wins the object, and her payoff is $-2+4-2 = 0$.
Therefore she is indifferent between bidding $\{1,2\}$ and not participating.
Bidder 2 cannot profit by deviating: if she bids $\{1\}$  or $\{2\}$ she surely loses,
and  bidding $\{3\}$ would be unprofitable for her.

Bidder 1 wins only if bidder 2 does not participate, which happens with probability $\frac{1}{2}$.
Hence her expected payoff is $-1 + \frac{1}{2}(4-1) = \frac{1}{2}$.
That is, bidder 1 has a positive profit.

Bidder 1 cannot profit by not participating, because by not participating her payoff decreases to 0.
We now verify that bidder 1 cannot profit by bidding on a set $S$, different than $\{1\}$.
Indeed, suppose that $S$ is not empty and is different than $\{1\}$.
If bidder 2 does not participate, bidder 1 wins and her profit is less than $v-c-1$,
which is her profit when she bids $\{1\}$.
If on the other hand bidder 2 bids $\{1,2\}$,
to increase her payoff bidder 1 must win, and therefore must bid on at least 3 numbers:
she must bid on $1$ and $2$ so that bidder 2 does not win, and on another number to ensure that she herself wins.
But then her profit is at most $v-3c-3$, which is negative.

The expected number of bids is $2$, and the expected winning bid is $\frac{3}{2}$.
Therefore the seller's profit is $2+\frac{3}{2}-4=-\frac{1}{2}$, which is negative.

This example can be generalized to any number of bidders:
when the number of bidders is $n$,
there is an equilibrium in which bidder 1 always bids $\{1\}$ (and makes a positive expected payoff),
and all the other bidders either bid $\{1,2\}$ or do not participate.

The example could be enriched  by adding non-strategic bidders to the  two strategic bidders. Even in this case one of the strategic bidders would have a positive profit, and the other would not lose money. 
\end{example}

Though it seems that the winning bid is low,
when the number of bidders is high,
in equilibrium there will be high bids.

\begin{proposition}
If the number of bidders $n$ exceeds $v/c$
then in every equilibrium there will be a bid of at least $v/(c+1) - 1$.
\end{proposition}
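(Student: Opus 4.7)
The plan is to combine Proposition~\ref{pr:prop1} with an explicit profitable deviation. Since $n>v/c$, Proposition~\ref{pr:prop1} provides a bidder, call her $i$, whose equilibrium expected payoff equals~$0$; it therefore suffices to exhibit, under the negation of the claim, a deviation of~$i$ with strictly positive expected payoff, contradicting the equilibrium property of her strategy.

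I would argue by contradiction, assuming that, with probability one, every bid of every bidder is strictly smaller than $v/(c+1)-1$. Setting $M:=\lceil v/(c+1)-1\rceil$, this means every bid lies in $\{1,2,\dots,M-1\}$. I would then let $i$ deviate to the bid set $\{1,2,\dots,M\}$, keeping the others' strategies fixed. The conceptual content lies in the choice of this deviation: the consecutive block $\{1,\dots,M\}$ simultaneously ``kills'' every potentially winning low unique bid of the opponents (by duplicating it) and secures $M$ itself as a safe winning bid. More precisely, no one else bids $M$, so $M\in S_*$, hence $S_*\neq\varnothing$ and $s_*\le M$; and any element of $S_*$ must have been bid by $i$ (since $i$ has covered every integer up to $M$), so $i$ wins. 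Her deviation payoff is therefore at least $v-s_*-Mc\ge v-M-Mc=v-M(c+1)$.

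To finish I would check $M(c+1)<v$. A short case split on whether $v/(c+1)$ is an integer yields $M<v/(c+1)$ in both cases, giving $v-M(c+1)>0$ and contradicting the zero equilibrium payoff of bidder~$i$. I do not expect any real obstacle: once one hits on the deviation $\{1,\dots,M\}$, the remainder is arithmetic around the ceiling function; the only mild subtlety is that the statement is vacuous unless $v>c+1$ (otherwise every positive integer is already at least $v/(c+1)-1$), so I may harmlessly assume $M\ge 1$ throughout.
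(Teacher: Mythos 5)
Your proposal is correct and follows essentially the same route as the paper: invoke Proposition~\ref{pr:prop1} to obtain a bidder with zero equilibrium payoff, have her deviate to a consecutive block $\{1,\dots,m\}$ that strictly exceeds every bid the others ever make (so she wins for sure at a price at most $m$), and derive $m(c+1)\ge v$ from the no-profitable-deviation condition. The paper phrases this directly in terms of $k$, the maximal integer bid with positive probability (deviating to $\{1,\dots,k+1\}$), while you phrase it contrapositively via the ceiling $M$, but the deviation and the arithmetic are the same.
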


\begin{proof}
Fix an equilibrium in the game.
Let $k$ be the maximal integer which some bidder bids with positive probability in equilibrium.
By Proposition~\ref{pr:prop1}, there is at least one bidder $i$ whose equilibrium payoff is 0.
If bidder $i$ bids on the $k+1$ numbers $1,2,\ldots,k+1$ then she surely wins, and her payoff is
$-c(k+1) + v - (k+1)$.
Since we are in equilibrium,
bidder $i$ cannot profit by bidding on all the numbers $1,2,\ldots,k+1$,
which implies that $-c(k+1) + v - (k+1) \leq 0$, so that
\[
k \geq \frac{v}{c+1}-1,
\]
as desired.
\end{proof}

\section{A behavioral model}\label{se:behavioral}

At first sight, the  empirical evidence may appear puzzling. On the one hand, the seller makes money. Since the game is a zero-sum game between the bidders and the seller, bidders on average lose money. Yet, the presence of recurring names in the list of winners suggests that sophisticated bidders may make money.  We here exhibit a simple behavioral model that reconciles these observations. It is directly inspired by the literature on financial market microstructure.

Assume that there is a unique strategic bidder, who faces a random number of ``noisy" bidders, who bid in a random way.

We denote by $N_i$ the random number of bidders who bid on $i$ and we assume that for each $i$,
the random variable $N_i$ follows a Poisson distribution with parameter $\lambda_i$, and that all variables $N_i$ are independent.
To model the fact that the number of bids on $i$ tends to decrease with $i$, we assume that $\lambda_i$ is non-decreasing.
One specification is $\lambda_i=v/c i^\alpha$, for each $i$, and some $\alpha>0$. With this parametric model, $\mathbb{E}[N_i]=\lambda^i$, and
the expected revenue to the seller is
\[2v\sum_i\frac{1}{i^\alpha},\]
which exceeds the value $v$ of the object provided $\alpha$ is not too large -- in particular if $\alpha=2$.

Note also that the expected number of (noisy) bids is linear in $v$. Empirically, it seems that the number of bids is indeed increasing in $v$.
Whether the linear specification is reasonable is beyond the scope of this note.

Assume that $\alpha=2$ and consider the ``brute force'' strategy
of the strategic bidder that consists in bidding all $i$ up to
some $b$. We write $b=\gamma\sqrt{v/c}$, and  optimize over
$\gamma$.

The cost of such a strategy is $c\times \gamma\sqrt{v/c}=\gamma\sqrt{cv}$.
The probability of winning the auction is at least the probability
that $N_b=0$, since the former is a sufficient (but not necessary)
condition for winning. Hence, using  the Poisson specification we
obtain that the probability of winning is at least $\exp(-1/\gamma^2)$.

Thus, the expected gain is at least
\[ve^{-1/\gamma^2}-\gamma\sqrt{cv}.\]

Taking the derivative with respect to $\gamma$ to optimize we see that the optimal $\gamma$ solves
\[
\frac{2}{\gamma^3}e^{-1/\gamma^2}=\sqrt{\frac{c}{v}}.
\]

When, e.g., the value of the object is $v=500$,
and the cost of each bid is $c=0.5$, we obtain $v/c=1,000$, and $\gamma$ roughly solves
\[\frac{1}{\gamma^3}e^{-1/\gamma^2}=\frac{1}{60}.\]
Numerical evidence suggests $\gamma=4$. The expected gain is then
roughly equal to $0.87v$,  which corresponds to a huge gain.

Thus, when there is a single strategic bidder,
it is very easy to come up  with a strategy that ensures a high gain for that bidder, and a positive gain for the seller as well.
As we will see in the next section, analyzing the data suggests that there might be room for making money
even in the presence of several strategic bidders.

\section{Analysis of Data}
\label{se:data}

We collected the complete data on all Bidster.com auctions that were performed between June
2009 and January 2010. For homogeneity purposes, we limited ourselves to  items whose published
value is between 450 Euro and 1300 Euro. These items include fashionable
consumer goods, like Apple products and Nokia devices, but also  500 Euro coupons
that can be used for bidding in other actions.\footnote{Our dataset is available upon request.}
For each auction in the sample, we obtained the type and (published) market value of the item being sold, the bidding cost, the winner's name and the entire distribution of bids.
 We thus obtained data on 210 auctions, with 170 auctions on `real' items and 40 on coupons. During the period under study, the cost of bidding was switched from 2 euros to 0.50 euro.\footnote{This change seems to have taken place over a few days, with more and more auctions involving a low bidding cost.}  Out of the 210 auctions, 113 were carried with a low bidding cost (13 of them were coupon auctions), 
 and 97 were carried with a high bidding cost.%
\footnote{Out of these auction data, one auction was carried with a bidding cost equal to \textit{zero}, probably for advertising purposes, and two auction data were incomplete, on the claimed ground that bids were too numerous. These three auctions were deleted from the sample.}

As we already mentioned, data analysis reveals that unlike the theoretical prediction
(Theorem \ref{th:nomoney}), the seller makes money. The origin of this
phenomenon may be (1) the participation of non-strategic bidders,
and/or (2) the fact that the utility of the bidders is not their
expected profit, but the bidders also derive utility from the
participation itself. In that sense, LUBA is not different from
lottery or day trading by non professionals.

As mentioned in the introduction, the list of winners include
recurring names. This suggests that some of the bidders have good
strategies that make money. Data suggests that some bidders use a
``block strategy'': a strategy that bids on a block of numbers.
As an illustration, we provide in the next figure the number of bids submitted on each integer between 560 and 680 cents, in the auction \#7300. We emphasize that such a pattern is representative of what happens throughout our sample

\bigskip

\begin{center}
FIGURE~\ref{fi:Numberofbids} ABOUT HERE
\end{center}¥

\bigskip

\begin{figure}[b]
\includegraphics[width=0.9\textwidth]{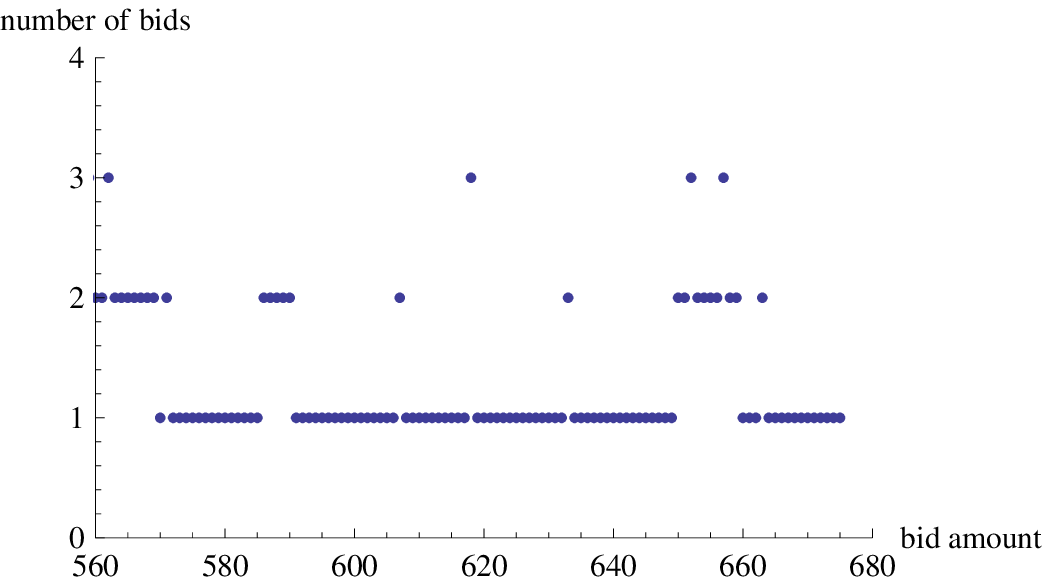}
\caption{Number of bids on each integer in auction \#7300.}
\label{fi:Numberofbids}
\end{figure}

Figure~\ref{fi:Numberofbids} displays the number of bids made on the integers from 560 to 680.
As is clear from this figure, all these integers  were offered by at least one bidder, and the overwhelming majority of these were submitted only once. This strongly suggests that a single bidder submitted all bids in this interval, while other bids were submitted by much less active bidders.

We also recall that the use of such a
block strategy has  been  advocated on the Internet (e.g., in the blog already quoted).

In Subsection~\ref{suse:data seller}, we report basic observations on the seller's revenue, and on the distribution of bids. In Subsection~\ref{suse:data bidder} we investigate the performance of block strategies.
We find that a significant number of such strategies  lose roughly 10\% to 15\% of the amount spent on it.
This finding suggests that a more sophisticated strategy,
that uses the dynamic data available to the bidders during the auction may in fact profit.
In Subsection~\ref{suse:winner} we elaborate on a possible strategy of the winner.

To allow for meaningful comparisons, and unless otherwise specified, we include here the auctions on true items, that were conducted with a bidding cost of 0.5.

\subsection{The Seller}\label{suse:data seller}

The graph in Figure~\ref{fi:Winningvaluation} provides the winning bid as a function of
the value of the sold item.  Both the value of the object and the winning bid are expressed in euros. As can be observed, the winning bid
hardly ever exceeds 2\% of the value of the object, so that the
contribution of the winning bid to the profit of the seller is
insignificant: as we mentioned earlier, the winning bid serves as
a device to attract potential participants, and not as a device to
increase the seller's profit.

\bigskip

\begin{center}
FIGURE~\ref{fi:Winningvaluation} ABOUT HERE
\end{center}¥

\bigskip

\begin{figure}[b]
\includegraphics[width=0.9\textwidth]{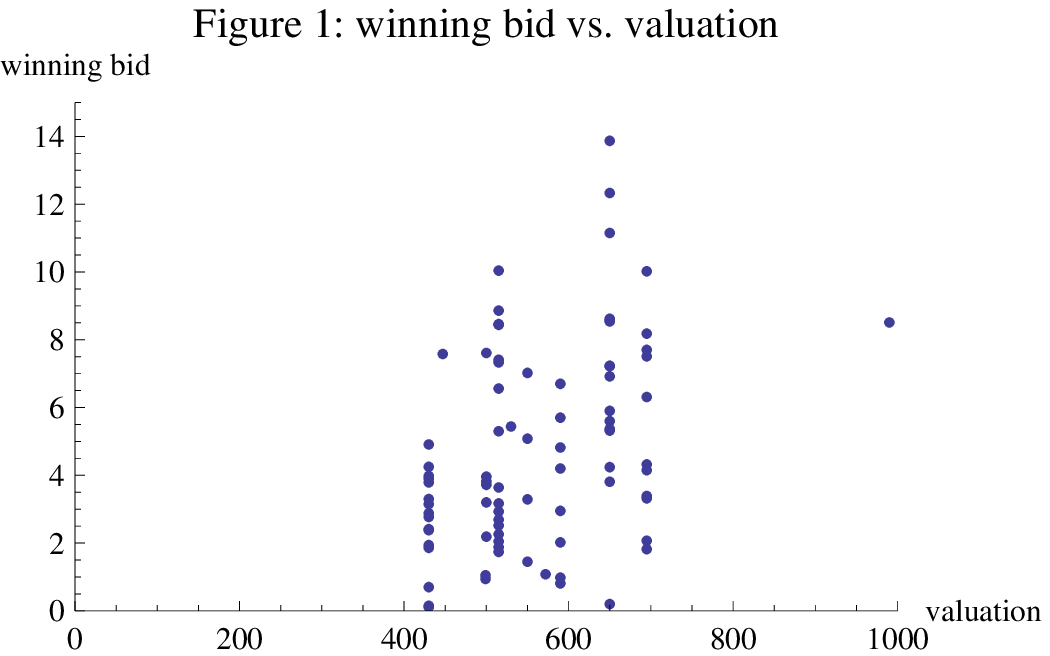}
\caption{Winning bid vs. valuation}
\label{fi:Winningvaluation}
\end{figure}

The graph in Figure~\ref{fi:Profitvaluation} provides the sellers's profit as a function of the market value of the item. Amounts are expressed in euros.

\bigskip

\begin{center}
FIGURE~\ref{fi:Profitvaluation} ABOUT HERE
\end{center}¥

\bigskip

\begin{figure}[b]
\includegraphics[width=0.9\textwidth]{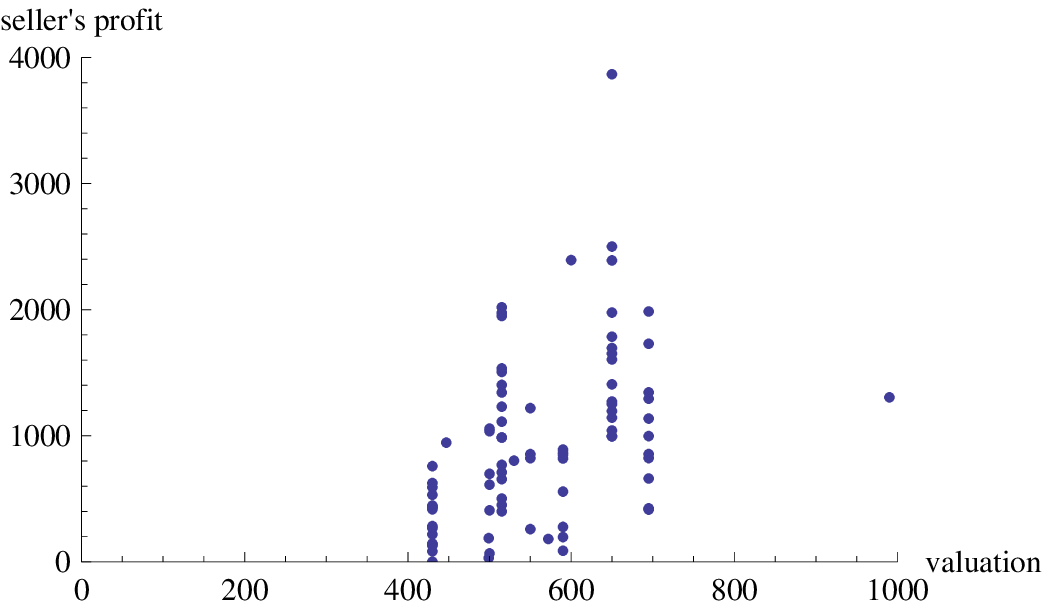}
\caption{Seller's profit vs. valuation}
\label{fi:Profitvaluation}
\end{figure}

Thus, in all auctions the seller's profit is positive, and most often significantly so. In fact, the average profit of the seller is 1121.32 euros, with a
standard deviation of 827.4.

As mentioned before, this observation
implies in particular that the participants do not play a Nash equilibrium, assuming
their utility is only derived from their monetary profit/loss.
Note that because the seller's profit is calculated w.r.t. the
published value of the item, rather than w.r.t. to price the
seller actually pays for the object, the seller's profit is in
fact higher than the figure provided above.

The profit of the seller on auctions for coupons is significantly below the profit made on auctions for true items. It seems that the goal of these auctions is to induce
participants to bid in other auctions, and thereby to provide an
indirect profit to the seller.

\subsection{The Bidders}
\label{suse:data bidder}

The graph in Figure~\ref{fi:CDF} provides the CDF of the percentage of the
winning bid from the object's value. It includes only the auctions which were conducted with a bidding cost of $0.5$.

\bigskip

\begin{center}
FIGURE~\ref{fi:CDF} ABOUT HERE
\end{center}¥

\bigskip

\begin{figure}[b]
\includegraphics[width=0.9\textwidth]{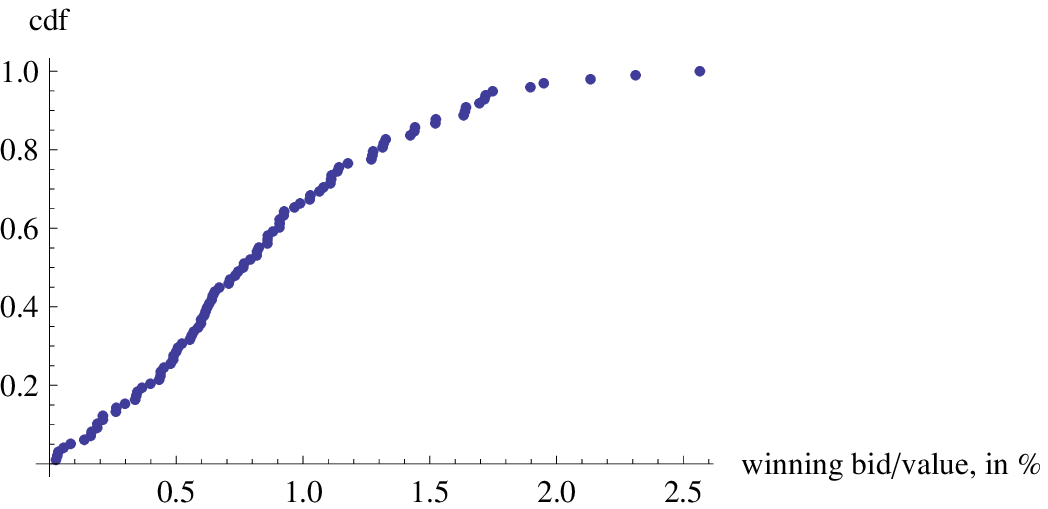}
\caption{The CDF of the ratio winning bid/valuation}
\label{fi:CDF}
\end{figure}

The
winning bid in more than 90\% of the auctions is less than 2\% of the sold
object's value. This suggests that if a bidder bids on {\em all}
numbers up to 2\% of the object's value, he will win the
object with high probability. Because each bid cost 0.5 Euro, the
cost of such a strategy is equal to the object's value. A natural
question that arises now is whether one can shrink the size of the
block on which one bids, thereby lowering the cost of bidding,
without affecting much the probability of winning.

For every $0 \leq x < y \leq 100$ denote by $\sigma_{x,y}$ the
strategy that bids on all numbers between $x\%$ and $y\%$ of the
item's value. When $c=0.5$, the cost of this strategy is $(y-x)v/2$. We would
like to test whether the strategy $\sigma_{x,y}$ can be
profitable, for various values of $x$ and $y$. To this end, we
check how well this strategy fares given our data. This is
equivalent to having one additional bidder in the auction, who
bids at the very last moment by using the strategy $\sigma_{x,y}$.
The fact that this bidder bids at the last moment implies that he
does not affect the way other participants bid. The table in
Figure~\ref{fi:Performance} describes how the strategy $\sigma_{x,y}$ would have
fared in our data set, for various values of $x$ and $y$ (both the
percentage of bids that it would win, and the average amount of
money it would make).

\bigskip

\begin{center}
FIGURE~\ref{fi:Performance} ABOUT HERE
\end{center}¥

\bigskip

\begin{figure}[b]
\includegraphics[width=\textwidth]{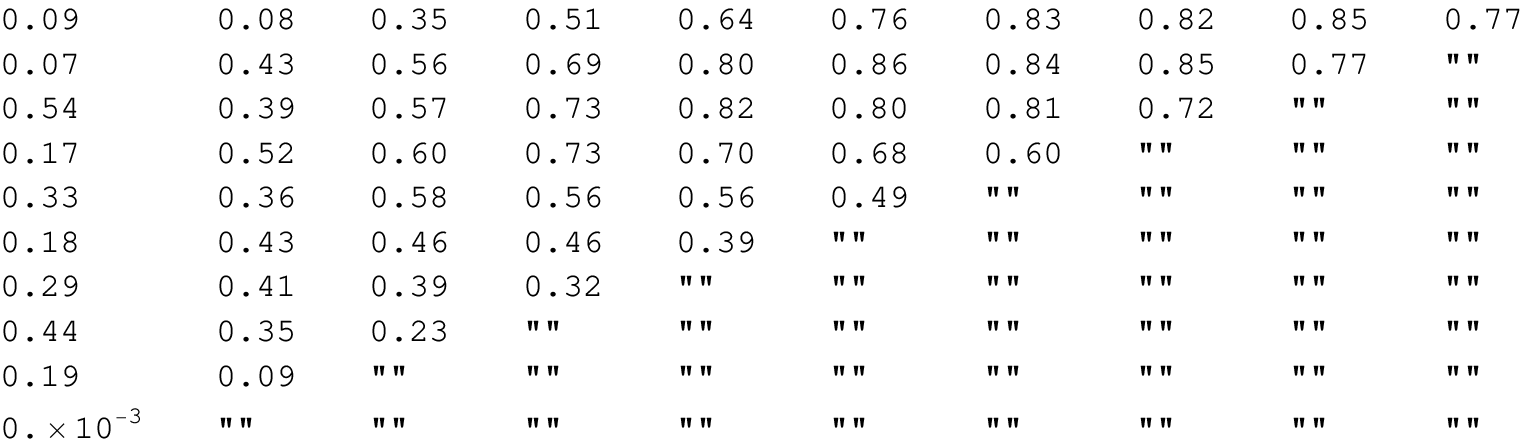}
\caption{Performance of the strategy  $\sigma_{x,y}$, as a function of $x$ and $y$}
\label{fi:Performance}
\end{figure}

This figure should be read as follows. The performance of the strategy $\sigma_{x,y}$ is defined as the ratio between the total value of the items won when using $\sigma_{x,y}$, and the total amount bid. Hence, the higher the ratio, the better the performance. Note also that ratios below one indicate that the gains do not offset the bidding costs, and therefore, losses are incurred.

The table above gives the value of the performance measure, for various choices of $x$ and $y$. Each of the ten rows corresponds to a given value for $x$, ranging from 0.1 to 2\%. On the row $x$, the different numbers correspond to different values for $y$, ranging from $x+0.2$ to $2.1$\%.

It is plain that none of the strategies $\sigma_{x,y}$ achieves a performance of at least one. That is, no
choice of $(x,y)$ would have allowed the additional bidder to claim gains. Optimal values for $(x,y)$ yield a performance of roughly 0.85, that is, a loss of the order of 15\% of the total amount of the bids.

\bigskip
These observations raise a number of remarks.

Note first that the strategy $\sigma_{x,y}$ does not make use of the
dynamic nature of the auction; it is probable that there are
better strategies that do use the dynamic nature of the auction
and make money, as we discuss more in detail below. However, because the internet sites that run
LUBA do not publish dynamic data, we were unable to pursue this
direction.

Note also that the strategy $\sigma_{x,y}$ treats auctions on items with different values in a neutral way.
One might expect that auctions on high-value items do not attract the same bidders, and the same number of bids, as auctions on low-value items. Hence, an obvious refinement of this exercise would consist in adjusting the values of $x$ and $y$ to the item value.

Our numerical analysis raises the following standard statistical concern. While we compute \textit{ex post} the optimal choices for $x$ and $y$, we did not check for the robustness of these estimates. In particular, it may \textit{a priori} be the case that using a different sample of auction data would lead to very different values for $x$ and $y$. As a consequence, relying on one sample to choose $x$ and $y$, and using these estimates to bid in other auctions may lead to a poor perfomance. While we acknowledge this, we observe that the performance of the strategy $\sigma_{x,y}$ does not appear to be sensitive, in a neighborhood of the optimal $(x,y)$. This suggests that the risk is limited in practice.

We believe however that our exercise raises a more subtle issue. As discussed above (see Figure 3),
the winning bid in all 98 auctions but 3 did not exceed 2\% of the item value. Since bidding \textit{all} integer up to 2\% of the value costs exactly the item value, we might expect the bidding strategy $\sigma_{0,2}$ to achieve a performance close to 1. Yet, as Figure 4 shows, our measured performance for this strategy is only 0.77 (last entry of the first row). Inspection of the data shows that this puzzling difference is only to a minor extent due to the fact that the three auctions in which the winning bid exceeded 2\% of the item value featured rather high-value items (valued at 600, 650 and 1149 euros).\footnote{Plainly, if the strategy $x-y$ wins $p$ \% of the auctions, but mainly those on low value items, its performance is below $p$.}
Instead, the discrepancy between the anticipated and the actual performance of the strategy $\sigma_{0,2}$  is to be traced to the following fact. Even though the winning bid turned out not to exceed 2\% in 95 out of 98 auctions, adding  bids according to $\sigma_{0,2}$ would have allowed to win only 84 of these 98 auctions.
As a striking illustration, the auction \#6787 (item value 1049 euros) was won with a bid of 61 cents. Yet, the lowest integer that was \textit{not} offered was 2372 cents -- more than 2\% of the item value, so that playing $\sigma_{0,2}$ at the last instant would have failed to win this auction. In fact, a huge fraction of the integers between 61 and 2372 was offered by only \textit{one} bidder. To be specific, none of these integers was offered by more than 6 bidders, two were offered by 6 bidders, and five by 5 bidders. Each of all remaining 2311 integers was  offered by at most 4 bidders. This provides further evidence of the importance of block bidding. This may suggest that the opportunities for profit are already well-exploited by existing strategic bidders. This also suggests that entering the auctions with the strategy $\sigma_{x,y}$ would have an impact on other bidders' behaviors.

This result suggests that even in practice, strategic bidders may be able to make money in LUBA.
Note that the fact that we added this strategic bidder does not harm the seller,
because the number of bids only increases, so that both the seller and the strategic bidder make money.

\subsection{The winner's strategy}\label{suse:winner}

In the previous subsection we considered a possibly interesting static strategy. Here we discuss the actual strategies that the winners of the various auctions play. In particular we examine a strategy that has been proposed in some blogs, and we argue that it is impossible to use the limited available data to infer the parameters used by the winner in this strategy.

The nature of available data makes it difficult to identify the strategy used by the various bidders, and especially by the winner, but nevertheless shows bids on  large and small intervals of numbers. Our results show that a static strategy, superimposed to the existing bids, in general loses money. In principle this does not refute the possibility that a static strategy could have been adopted by the winner. It just makes it implausible. In any case, based on the existing data,  we would not be able to distinguish a purely static interval strategy from a dynamic one, as described in some blogs. 

A possible dynamic strategy for a rational bidder $i$, say, would involve bidding on some numbers at random in the initial stages of the auction. Towards the end of the auction bidder $i$ picks the lowest unique among her bids (call it $y$), and bids  on a large enough interval of numbers between some value $x < y$ and $y-1$.  The length of the interval should be large enough as to kill all possible lower unique bids of other participants, but small enough as to permit a gain. The bidder can calibrate the lower extreme $x$ of this interval using data from previous similar auctions as follows: call $z$ the largest number such that all numbers below $z$ have been bid by at least three bidders. Look at the distribution of $z$ in the past auctions. The value $x$ could be chosen to be a quantile of the distribution of $z$, for instance the 95\% quantile. This would give a probability of 95\% of winning the auction if $y$ is not killed by some other bidder. This is because with probability .95 all the numbers below $z$ will have at least two bids (plus the possible bid of the winner of the previous auctions), and all the numbers between $x$ and $y-1$ will be killed by the interval bidding. If the interval $[x, y-1]$ is small enough, this will guarantee a gain, if the bidder wins the auction, which happens with probability .95 times the probability that $y$ is not killed.

Once $x$ is well calibrated, this strategy could be very efficient when the bidder who adopts it is the only fully strategic bidder. The other bidders could either be irrational and just bid a few numbers non-strategically or could have some constraints in their strategies, for instance a low budget that does not allow them to bid on large intervals. 

If several bidders act rationally without budget constraints, then the situation is more complicated. If all the rational bidders adopt the same strategy, then they will have the same probability to win the object, therefore the auction will be like a lottery among them. In this case, if a strategic bidder uses the above strategy, then she should take into account the fact that her unique bid  $y$ may be killed by some other bidder at the end of the auction. Therefore she may decide to look for additional unique bids above $y$  and then to kill additional intervals also above $y$.

Even if we assume that various bidders adopt an adjusted  dynamic interval strategy, as described above, different factors may intervene in the choice of its parameters. For instance, bidders may have different risk postures. Some of them may go after some almost sure long term gain (even if small). Some others may choose to seek only reasonably large sporadic gains, even if more uncertain. The risk averse bidders will probably choose to bid on a larger interval of numbers, whereas the risk seekers may want to bid on a smaller interval below a larger unique bid, in the hope that the smaller numbers be mutually killed by other bidders.

One more reason for adopting different dynamic interval strategies could be that bidders have different complexity bounds. For instance they may have  different hardware and software, different historical datasets and especially different computational skills. This could give the bidders different capabilities of calibration of the optimal interval.  If the rational bidders adopt strategies of different complexity, the bidder with the largest complexity will have the highest chance to win the object. 

When all is said and done, recovering the winner's strategy from the data we have does not seem feasible. Even assuming that the winner adopted (a variation of) an interval strategy, the extremes of this interval are difficult to figure out, not to mention the random numbers bid in the first part of the auction. This entails that no reasonably reliable estimate of the winner's gain  can be proposed. Therefore it is also difficult to imagine what fraction of auctions the rational winners actually win and then how many of them obtain a positive gain in the long run.

\section{Concluding comments}
\label{se:comments}

Our study raises several questions.

We identified the unique symmetric equilibrium when there are two bidders,
and proved that in this case all equilibria are in monotone strategies.
\citet{EichVin:mimeo2008} identified all pure equilibria (not necessarily symmetric) of this game, when only monotone strategies are used.
It would of theoretical interest to find all the equilibria of the game without this restriction on the feasible strategies.

To be able to gain more insight on the dynamic behavior of the bidders,
it is crucial to find a tractable model that will include
the particular form of information revelation that this auction method exhibits.
What is the proper model to use? Moreover we would need dynamic data to say something significant on the strategy used by the bidders, on the number of strategic bidders, and on their average gain.

One of our assumptions was that the value of the object, $v$,
is common to the seller and to the bidders.
The analysis can be carried out when the value of the object to the seller is lower than its value to the bidders,
as is usually the case.
It is interesting to know how the results change when the bidders have different private values and when the bidders derive utility from the mere participation in  the auction (like a poker player who draws pleasure by just playing).

\bibliographystyle{artbibst}
\bibliography{bibbidplaza}

\end{document}